\theoremstyle{plain}
\newtheorem{Cor}{Corollary}
\newtheorem{Thm}{Theorem}
\newcommand*{\Ann}{\ensuremath{\mathrm{Ann\,}}}
\newcommand*{\Hom}{\ensuremath{\mathrm{Hom\,}}}
\newcommand*{\R}{\ensuremath{\mathbb{R}}}
\newcommand*{\Z}{\ensuremath{\mathbb{Z}}}
\newcommand*{\C}{\ensuremath{\mathbb{C}}}
\newcommand*{\N}{\ensuremath{\mathbb{N}}}
\begin{document}
	
	\date{}
	
	\author{
		L\'aszl\'o Sz\'ekelyhidi\\
		{\small\it Institute of Mathematics, University of Debrecen,}\\
		{\small\rm e-mail: \tt szekely@science.unideb.hu,}
	}
	
	\title{On The Principal Ideal Theorem}

	\maketitle
	
	\begin{abstract}
		
			In our recent work, we solved a long-standing problem originating from Laurent Schwartz’s theory of mean-periodic functions by fully characterizing those locally compact abelian groups that admit spectral synthesis. The key tool is our localization technique, which we now apply to provide a conceptually transparent proof for the generalization of a fundamental result in spectral synthesis: the Principal Ideal Theorem on any locally compact abelian group.
		
	\end{abstract}
	
	\footnotetext[1]{The research was supported by the  the
		Hungarian National Foundation for Scientific Research (OTKA),
		Grant No.\ K-134191.}\footnotetext[2]{Keywords and phrases:
		variety, spectral synthesis}\footnotetext[3]{AMS (2000) Subject Classification: 43A45, 22D99}
	\vskip1cm
	
\section{Introduction}

The following result, established by B.~Malgrange in 1954 (see \cite{MR0060113}; see also \cite{MR2680008}), is commonly referred to as the \emph{Principal Ideal Theorem}:

\begin{Thm}
	For any nonzero linear partial differential operator $P(D)$ on $\R^n$, 
	the exponential monomials solving that solve the equation $P(D)f = 0$ linearly span a dense subspace of the solution space.
\end{Thm}

A related result was proved in \cite{MR0086990} (see also \cite{MR0076303}):

\begin{Thm}
	Given a compactly supported complex Borel measure $\mu$ on $\R^n$, the exponential monomials solving the convolution equation $\mu * f = 0$ span a dense subspace of the space of all continuous solutions.
\end{Thm}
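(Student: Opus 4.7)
The plan is to recast the statement as a spectral synthesis problem for the variety
\[
V = \{f \in C(\R^n) : \mu * f = 0\},
\]
a translation invariant closed subspace of $C(\R^n)$ endowed with the compact-open topology. The claim becomes that $V$ is \emph{synthesizable}, i.e.\ that the closed linear hull of its exponential monomials coincides with $V$ itself.

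By Hahn--Banach duality, this is equivalent to proving that every compactly supported complex Borel measure $\nu$ on $\R^n$ which annihilates every exponential monomial member of $V$ must annihilate all of $V$. The topological dual of $C(\R^n)$ can be identified with the convolution algebra $M_c(\R^n)$ of compactly supported complex Borel measures, and the annihilator of $V$ in $M_c(\R^n)$ is precisely the closed principal ideal $(\mu)$ generated by $\mu$. Thus the entire assertion reduces to the following purely algebraic statement: if $\nu\in M_c(\R^n)$ annihilates every exponential monomial in $V$, then $\nu\in(\mu)$.

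The next step is to translate the annihilation hypothesis spectrally. An exponential monomial $x \mapsto p(x) e^{\langle\lambda, x\rangle}$ belongs to $V$ iff a specific finite jet condition on the Fourier--Laplace transform $\hat\mu$ at the point $\lambda\in\C^n$ is satisfied, matching the degree and form of $p$. Consequently the hypothesis on $\nu$ becomes the statement that, at every character $\lambda\in\C^n$, the Taylor jet of $\hat\nu$ at $\lambda$ lies in the ideal of jets generated by that of $\hat\mu$ at $\lambda$. In the terminology developed in the author's earlier work, this is exactly the condition that $\nu$ belongs to every localization of $(\mu)$ at the maximal ideals $M_\lambda = \{\sigma \in M_c(\R^n) : \hat\sigma(\lambda) = 0\}$.

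The main obstacle, and the decisive step, is the local-to-global principle: promoting the information that $\nu\in(\mu)$ locally at every $M_\lambda$ to the global conclusion $\nu\in(\mu)$ in $M_c(\R^n)$. This is the point at which the \emph{principal} nature of the ideal is essential, and where the localization machinery of the author's earlier work is called upon: one must show that $(\mu)$ equals the intersection of all its localizations. I would implement this by a Fourier-side division argument---taking the inverse Fourier transform of the quotient of the jets of $\hat\nu$ by those of $\hat\mu$ to produce local candidates---and then assemble these into a single compactly supported measure $\varphi$ with $\nu = \varphi * \mu$ via a completeness/compactness step. Once this localization-to-globalization principle is in hand, the theorem follows at once.
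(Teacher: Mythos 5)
You should first be aware that the paper offers no proof of this statement: Theorem~\ref{MalEhr} is quoted as a known result of Malgrange (with Ehrenpreis's variant for principal ideals) and is used as the Euclidean base case of the main argument, so there is no internal proof to compare yours against. Judged on its own terms, your reduction is the correct and standard bookkeeping: passing by Hahn--Banach to the dual statement that any $\nu$ in $\mathcal M_c(\R^n)$ annihilating the exponential monomials of $V$ lies in the (weak*-closed) ideal generated by $\mu$, and translating the hypothesis into the condition that at every $\lambda\in\C^n$ the Taylor jet of $\widehat{\nu}$ is divisible by that of $\widehat{\mu}$ in $\C[[z_1,\dots,z_n]]$. (Even here a word is owed to why Macaulay-type duality recovers the principal jet ideal from its polynomial annihilators, but that is routine.)

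The genuine gap is that your final step -- ``a Fourier-side division argument \dots\ assembled via a completeness/compactness step'' -- is exactly where the entire analytic content of Malgrange's theorem lives, and the sketch does not engage with it. Local formal divisibility at every point does give an entire function $F=\widehat{\nu}/\widehat{\mu}$, but the theorem requires showing that $F$ is again a Fourier--Laplace transform of something compactly supported, and this does not follow from any soft completeness argument: one needs quantitative lower bounds on $|\widehat{\mu}|$ (the minimum-modulus/H\"ormander--Malgrange division estimates for entire functions of exponential type) to control $F$ and place its support in the right convex set via Paley--Wiener. Moreover, even with those estimates the quotient is in general only a compactly supported \emph{distribution} of finite order, not a measure, so the exact factorization $\nu=\varphi*\mu$ in $\mathcal M_c(\R^n)$ that you announce is stronger than what the method yields; what one actually obtains is that $\nu$ lies in the weak*-closure of the ideal generated by $\mu$, which is what synthesis requires but must be stated as such. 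Without the division estimates your argument proves only the easy implication, and the theorem remains open at its hardest point.
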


Obviously, the same problem can be considered on any locally compact abelian group as the concept of {\it exponential monomial} is well-defined (see e.g. \cite{MR2680008, MR3185617}). More generally, if a system of convolution equations of the form
\begin{equation}\label{conveq}
	\mu*f=0\hskip2cm \mu\in \Gamma
\end{equation}
is given, where $\Gamma$ is a nonempty set of compactly supported measures on the locally compact abelian group $G$, then we say that this system is {\it synthesizable}, if the exponential polynomial solutions span a dense subspace in the solution space. Density is meant in the compact-open topology of the space $\mathcal C(G)$ of all continuous complex valued functions on $G$. The solution space of an equation system of the form \eqref{conveq}  is called a {\it variety} and it can be characterized as a closed translation invariant linear subspace in $\mathcal C(G)$. It turns out that synthesizability of the variety $V$ can be characterized in terms of its {\it annihilator} $\Ann V$, which is the set of all measures $\mu$ satisfying \eqref{conveq} for each $f$ in $V$. Indeed, $\Ann V$ is a closed ideal, and $V$ is synthesizable if and only if $\Ann V$ coincides with the intersection of all ideals of the form $M^k\supseteq \Ann V$, where $M$ is a maximal ideal whose residue algebra is the complex field $\C$, and $\Ann M^k$ is finite dimensional. For the details see \cite{MR2680008}, \cite[Theorem 17]{MR3502634}. 

In our recent work, we introduced the concept of localization of ideals in the Fourier algebra of a locally compact abelian group (see \cite{Sze23}). It turns out that the localizability of an ideal is equivalent to the synthesizability of the corresponding variety in the function space. Using this observation, we characterized those locally compact abelian groups admitting spectral synthesis. In the present paper, we apply the localization technique to provide a straightforward and transparent proof of the Principal Ideal Theorem for arbitrary locally compact abelian groups.

\section{Differential operators on the Fourier algebra}

Let $G$ be a locally compact abelian group, and let $\mathcal{M}_c(G)$ denote the \emph{measure algebra} of $G$, that is, the space of all compactly supported complex Borel measures on $G$, equipped with linear operations and the weak$^*$ topology. Furthermore, let $\mathcal{A}(G)$ denote the \emph{Fourier algebra} of $G$, consisting of all Fourier transforms of measures in $\mathcal{M}_c(G)$. Recall that the Fourier transform of a measure $\mu$ in $\mathcal{M}_c(G)$ is defined by
\[
\widehat{\mu}(m) = \int \widecheck{\mu}(x)\, d\mu(x) = \int \mu(-x)\, d\mu(x),
\]
whenever $m$ is an \emph{exponential} on $G$, that is, a continuous homomorphism from $G$ into the multiplicative group of nonzero complex numbers.

It is well known that the Fourier transform is a bijective homomorphism between $\mathcal{M}_c(G)$ and $\mathcal{A}(G)$. We equip $\mathcal{A}(G)$ with the topology induced by this bijection, turning it into a topological algebra that can be identified with $\mathcal{M}_c(G)$. Using this identification for each measure $\mu$ and each set $I$ in $\mathcal M_c(G)$ we use the notation $\widehat{\mu}$ and $\widehat{I}$ for the corresponding element and subset in $\mathcal A(G)$. Using the annihilator-variety correspondence, which is a one-to-one mapping between the closed ideals of the measure algebra $\mathcal M_c(G)$ and the varieties in $\mathcal C(G)$, we have a one-to-one mapping between the varieties in $\mathcal C(G)$ and closed ideals in the Fourier algebra $\mathcal A(G)$. Somewhat loosely, we say that a closed ideal in the Fourier algebra is synthesizable, if the corresponding variety in the function space is synthesizable.

Since the functions in $\mathcal{A}(G)$ are defined on the set $\widetilde{G}$ of all exponentials, it is natural to introduce a topology on this set. Clearly, $\widetilde{G}$ is a commutative group under pointwise multiplication. In fact, $\widetilde{G}$ is isomorphic to $\widehat{G} \times \Hom(G, \R)$, where $\widehat{G}$ is the Pontrjagin dual of $G$, and $\Hom(G, \R)$ is the space of all continuous real-valued homomorphisms on $G$.

Equipping $\Hom(G, \R)$ with the compact-open topology makes it a topological group, although it may not be locally compact in general. The natural topology on $\widetilde{G}$ is then the product topology. If $G$ is compactly generated, then $\Hom(G, \R)$ is a finite-dimensional vector space, and $\widetilde{G}$ is locally compact. The elements of $\Hom(G, \R)$ are called \emph{additive functions} or \emph{real characters}.

We define differential operators on the Fourier algebra using derivations. A continuous linear operator $D: \mathcal{A}(G) \to \mathcal{A}(G)$ is called a \emph{first-order derivation} if it satisfies
\[
D(\widehat{\mu} \cdot \widehat{\nu}) = D(\widehat{\mu}) \cdot \widehat{\nu} + \widehat{\mu} \cdot D(\widehat{\nu})
\]
for all $\widehat{\mu}, \widehat{\nu}$ in $\mathcal{A}(G)$. We call $D$ a \emph{first-order polynomial derivation} on $\mathcal{A}(G)$ if and only if there exists an additive function $a$ such that
\[
D\widehat{\mu}(m) = \int \widecheck{a}(x)\, \widecheck{m}(x)\, d\mu(x),
\]
for all $\widehat{\mu}$ in $\mathcal{A}(G)$. It is easy to verify that this formula indeed defines a first-order derivation.

In \cite{Sze23}, we proved that all first-order derivations generate a commutative algebra $\mathcal{P}(G)$, called the \emph{algebra of polynomial derivations} or \emph{polynomial differential operators}, which can be written in the form $D = P(D_1, D_2, \dots, D_k)$, where $P$ is a complex polynomial in $k$ variables. If $P$ is of degree $n$ and $D_1, D_2, \dots, D_k$ are linearly independent, then $D$ is called a derivation of order $n$. The identity operator is considered a derivation of order zero.

Differential operators can also be defined analytically. A continuous homomorphism $\gamma: \C \to \widetilde{G}$ is called a \emph{one-parameter group} in $\widetilde{G}$. For example, for each additive function $a$, define
\[
\gamma_a(z)(g) = \exp(za(g)).
\]
Then $\gamma_a$ defines a one-parameter group in $\widetilde{G}$.

Let $\gamma$ be a one-parameter group in $\widetilde{G}$ and let $m$ be an exponential. A function $f: \widetilde{G} \to \C$ is said to be \emph{differentiable along $\gamma$ at $m$} if the following finite limit exists:
\[
\partial_\gamma f(m) = \lim_{z \to 0} \lim_{w \to 0} \frac{1}{w} \left( f(m \cdot \gamma(z+w)) - f(m \cdot \gamma(z)) \right).
\]
If $\gamma = \gamma_a$, we write $\partial_a f(m)$ for $\partial_\gamma f(m)$. It is easy to show (see e.g. \cite[Theorem 11]{MR3481109}) that every one-parameter group in $\widetilde{G}$ is of the form $\gamma_a$ for some additive function $a$.

If $\partial_a f(m)$ exists for all $m$, then we can define the function $m \mapsto \partial_a f(m)$. If this function is differentiable along the one-parameter group $\gamma_b$ corresponding to another additive function $b$, then we can define the second derivative $\partial_b \partial_a f$, and so on. This leads to higher-order derivatives of the form $\partial_{a_1}^{\alpha_1} \partial_{a_2}^{\alpha_2} \dots \partial_{a_k}^{\alpha_k} f$, corresponding to additive functions.

If such derivatives exist for arbitrary choices of additive functions $a_1, a_2, \dots, a_k$, then we say that $f$ belongs to the class $C^\infty$. In \cite[Theorem 12]{Sze23}, we proved that every Fourier transform is a $C^\infty$ function, and
\begin{equation}\label{Foudiff}
	P(\partial_{a_1}, \partial_{a_2}, \dots, \partial_{a_k})\, \widehat{\mu}(m) =
	\int P(\widecheck{a}_1, \widecheck{a}_2, \dots, \widecheck{a}_k)\, \widecheck{m}(x)\, d\mu(x)
\end{equation}
holds whenever $a_1, a_2, \dots, a_k$ are additive functions, $P$ is a complex polynomial in $k$ variables, $\mu \in \mathcal{M}_c(G)$, and $m$ is an exponential on $G$.

These considerations lead to the following characterization:

\begin{Thm}
	Given additive functions $a_1, a_2, \dots, a_k$ on $G$ and a complex polynomial $P$ in $k$ variables, the mapping $P(\partial_{a_1}, \partial_{a_2}, \dots, \partial_{a_k})$ defined on the Fourier algebra by equation \eqref{Foudiff} is a polynomial derivation on $\mathcal{A}(G)$. Conversely, every polynomial derivation on $\mathcal{A}(G)$ is of the form \eqref{Foudiff}.
\end{Thm}

\begin{proof}
	The first part of the statement follows immediately from the fact that $\partial_a$ is a derivation for each additive function $a$. For the second part, assume that $D$ is a first-order derivation on $\mathcal{A}(G)$. As shown above, $D$ has the form
	\[
	D\widehat{\mu}(m) = \int \widecheck{a}(x)\, \widecheck{m}(x)\, d\mu(x).
	\]
	Comparing this formula with \eqref{Foudiff}, we conclude that $D = \partial_a$.
\end{proof}

\section{Localization}

Let $\widehat{I}$ be an ideal in the Fourier algebra $\mathcal{A}(G)$, and let $\mathcal{P}$ be a set of polynomial differential operators, with $m$ an exponential. We say that $\mathcal{P}$ \emph{annihilates $\widehat{I}$ at $m$} if
\[
\partial^{\alpha} P\widehat{\mu}(m) = 0
\]
for every $P \in \mathcal{P}$ and every multi-index $\alpha \in \N^n$, where $n$ is the number of variables of $P$. We denote by $\mathcal{P}_{\widehat{I}, m}$ the set of all polynomial differential operators that annihilate $\widehat{I}$ at $m$. Conversely, let $\widehat{I}_{\mathcal{P}, m}$ be the set of all Fourier transforms that are annihilated by $\mathcal{P}$ at $m$. This set $\widehat{I}_{\mathcal{P}, m}$ forms a closed ideal.

The ideal $\widehat{I}$ is called \emph{localizable} (see \cite{Sze23}) if
\[
\widehat{I} = \bigcap_m \widehat{I}_{\mathcal{P}_{\widehat{I}, m}, m}.
\]
Equivalently, $\widehat{I}$ is \emph{non-localizable} if there exists a function $\widehat{\nu} \notin \widehat{I}$ such that $\widehat{\nu}$ is annihilated by $\mathcal{P}_{\widehat{I}, m}$ at every $m$.


Note that if $\mathcal{P} \ne \{0\}$ annihilates $\widehat{I}$ at $m$, then $\widehat{\mu}(m) = 0$ for every $\widehat{\mu}$ in $\widehat{I}$. Hence, every localizable ideal has a root. If $m$ is not a root of $\widehat{I}$, then $\mathcal{P}_{\widehat{I}, m} = \{0\}$, and thus $\widehat{I}_{\mathcal{P}_{\widehat{I}, m}, m} = \mathcal{A}(G)$. Therefore, the only localizable ideal without any root is $\mathcal{A}(G)$ itself.


The fundamental result concerning localizability is the following (see \cite[Theorem 4]{Sze23}):

\begin{Thm}
	An ideal $\widehat{I}$ is localizable if and only if $\Ann I$ is synthesizable.
\end{Thm}

\section{Roots of principal ideals on $\R^n$}

We recall some basic facts on localization (see \cite{Sze23}). Let $\widehat{I}$ be a principal ideal in $\mathcal{A}(\R^n)$ generated by the Fourier transform $\widehat{\mu}$. We show that if $\widehat{I}$ is proper, i.e., $\widehat{I} \ne \mathcal{A}(\R^n)$, then it has a root.

Let $K$ be the support of $\mu$. For each finite subset $F \subseteq K$, let $\mu_F$ denote the restriction of $\mu$ to the subgroup $G_F$ of $\R^n$ generated by $F$. More precisely, we define
\[
\langle \mu_F, f \rangle = \int f \cdot \chi_F\, d\mu,
\]
where $\chi_F$ denotes the characteristic function of the set $F$. The group $G_F$ is a (continuous) homomorphic image $\Phi(\Z^k)$ of $\Z^k$ for some positive integer $k$; in fact, it can be identified with a factor group of $\Z^k$. It follows that the exponentials on $G_F$ can be identified with those exponentials on $\Z^k$ that are constant on the cosets of the kernel of $\Phi$.

On the other hand, the Fourier transform of $\mu_F$ is a Laurent polynomial of the form $\widehat{\mu}_F = \frac{p}{q}$, where $p$ and $q$ are polynomials. Hence, the set $Z_F$ of roots of $\widehat{\mu}_F$ coincides with the zero set of the polynomial $p$. Consequently, $Z_F$ is an algebraic variety. It is nonempty, as spectral synthesis holds on finitely generated abelian groups. Moreover, $Z_F$ is compact in the Zariski topology.

Now consider the family of all sets of the form $Z_F$, where $F$ is a finite subset of the support of $\mu$. It is straightforward to verify that this family satisfies the finite intersection property. Since the family consists of Zariski-compact sets, we infer that $\bigcap_F Z_F \ne \emptyset$. Clearly, the elements of this intersection are roots of the Fourier transform of $\mu$, that is, of the principal ideal $\widehat{I}$.


We define a topological abelian group $G$ to be a \emph{PIT-group} if the Principal Ideal Theorem holds on $G$: that is, if the annihilator of a variety on $G$ is a principal ideal, then the variety is synthesizable—meaning the exponential monomials span a dense subspace in the variety.

Recall that an \emph{exponential monomial} on $G$ is a complex-valued function of the form
\[
x \mapsto P(a_1(x), a_2(x), \dots, a_k(x))\, m(x),
\]
where $P$ is a complex polynomial in $k$ variables, $a_1, a_2, \dots, a_k$ are additive functions, and $m$ is an exponential. Linear combinations of exponential monomials are called \emph{exponential polynomials}, and if $m = 1$, they are called \emph{polynomials}.

\begin{Thm}
	Every continuous homomorphic image of a PIT-group is a PIT-group.
\end{Thm}

\begin{proof}
	Let $G$ be a PIT-group, and let $\Phi: G \to H$ be a surjective continuous homomorphism. Consider the variety $W$ on $H$ defined as the solution space of the equation $\mu * f = 0$, where $\mu$ is a compactly supported complex Borel measure on $H$. Let $I = \Ann W$, and we aim to show that the ideal $\widehat{I}$ is localizable.
	
	Let $V$ be the set of all functions of the form $f \circ \Phi$ with $f \in W$. It is easy to see that $V$ is a variety on $G$. Indeed, $V$ is a linear space and translation invariant:
	\[
	(\alpha f + \beta g) \circ \Phi = \alpha (f \circ \Phi) + \beta (g \circ \Phi), \quad \tau_y(f \circ \Phi)(x) = \tau_{\Phi(y)} f(\Phi(x)).
	\]
	Moreover, $V$ is closed: if the net $(f_i \circ \Phi)$ converges to $F$ on $G$ with $f_i \in W$, then $F = \varphi \circ \Phi$ for some $\varphi \in W$, hence $F \in V$.
	
	Define $\mu_{\Phi}$ on $V$ by
	\[
	\langle \mu_{\Phi}, f \circ \Phi \rangle = \langle \mu, f \rangle.
	\]
	Then $\mu_{\Phi}$ is a linear functional on $V$, and by the Hahn–Banach Theorem, it extends to a linear functional on $\mathcal{C}(G)$—denoted by the same symbol. Thus, $\mu_{\Phi} \in \mathcal{M}_c(G)$, and $f \circ \Phi \in V$ if and only if $\mu_{\Phi} * (f \circ \Phi) = 0$. In other words, the annihilator $J = \Ann V$ is the principal ideal generated by $\mu_{\Phi}$.
	
	Since $G$ is a PIT-group, the exponential polynomials in $V$ form a dense subspace. Let $f \circ \Phi$ be an exponential polynomial in $V$. Then $f$ is also an exponential polynomial. By \cite[Theorem 3]{MR2680008}, it suffices to show that the variety of $f$ is finite-dimensional, which follows from the same property of $f \circ \Phi$.
	
	If $f \in W$, then $f \circ \Phi \in V$, and there exists a net $(\varphi_i \circ \Phi)$ of exponential polynomials in $V$ converging to $f \circ \Phi$. Hence, the net $(\varphi_i)$ consists of exponential polynomials in $W$ and converges to $f$. This completes the proof.
\end{proof}

Recently, in \cite[Theorem 1]{MR4789359}, we proved that spectral synthesis holds on a locally compact abelian group $G$ if and only if it holds on $G/B$, where $B$ is the closed subgroup of $G$ generated by its compact elements. With a slight modification of the proof, we obtain the following:

\begin{Thm}\label{comp}
	A locally compact abelian group $G$ is a PIT-group if and only if $G/B$ is a PIT-group, where $B$ is the closed subgroup of $G$ generated by its compact elements.
\end{Thm}

Using these theorems, we can now prove our main result.

\begin{Thm}
	Every locally compact abelian group is a PIT-group.
\end{Thm}

\begin{proof}
	We first show that $\R^n$ is a PIT-group. Let $\widehat{I}$ be a proper principal ideal in $\mathcal{A}(\R^n)$, and let $z_0$ be a root of $\widehat{I}$ - as shown above, such a root always exists. Let $\widehat{M}_0$ be the corresponding maximal ideal in $\mathcal{A}(\R^n)$, and set $S_0 = \mathcal{A}(\R^n) \setminus \widehat{M}_0$. Define the localization
	\[
	R_0 = S_0^{-1} \mathcal{A}(\R^n).
	\]
	Then $R_0$ is a local ring with unique maximal ideal $S_0^{-1} \widehat{M}_0$. In fact, $R_0$ can be identified with the ring of germs of meromorphic functions at $z_0$, whose numerators lie in $\mathcal{A}(\R^n)$ and denominators in $S_0$. The ring $R_0$ is Noetherian. Let $S_0^{-1} \widehat{I}$ be the ideal in $R_0$ corresponding to $\widehat{I}$. Then the residue ring $R_0 / S_0^{-1} \widehat{I}$ is also Noetherian, with unique maximal ideal $S_0^{-1}(\widehat{I} + \widehat{M}_0)$.
	
Let $P$ be a differential operator that annihilates $\widehat{I}$ at $z_0$. Then $P$ also annihilates $S_0^{-1} \widehat{I}$ at $z_0$. The converse is true as well: the same differential operators annihilate both $\widehat{I}$ and $S_0^{-1} \widehat{I}$ at $z_0$. This follows directly from the representation of elements of $R_0$ as meromorphic functions.


By the Noetherian property and Krull's Intersection Theorem, we have
\[
S_0^{-1} \widehat{I} = \bigcap_n S_0^{-1}(\widehat{I} + \widehat{M}_0)^n = \bigcap_n \left( S_0^{-1} \widehat{I} + S_0^{-1} \widehat{M}_0^n \right).
\]
It follows that the ideal $S_0^{-1} \widehat{I}$ is synthesizable (see \cite[Theorem 17]{MR3502634}), and hence it is localizable .

Now assume that $\widehat{\nu}$ is annihilated at $z_0$ by every differential operator that annihilates $\widehat{I}$ at $z_0$. Then $\widehat{\nu}/1$ is annihilated at $z_0$ by every differential operator that annihilates $S_0^{-1} \widehat{I}$ at $z_0$. By the localizability of $S_0^{-1} \widehat{I}$, we infer that $\widehat{\nu}/1$ is  in $S_0^{-1} \widehat{I}$, and consequently $\widehat{\nu}$ belongs to $\widehat{I}$. This argument applies for each root $z_0$ of the ideal $\widehat{I}$, which implies that $\widehat{I}$ is localizable. Therefore, by the results in \cite{Sze23}, it is synthesizable. This completes the proof.


To consider the general case, we may assume that $G$ is compactly generated, and hence it has the form $G = \R^n \times \Z^k \times C$, where $n, k$ are nonnegative integers and $C$ is a compact abelian group. By Theorem~\ref{comp}, the problem reduces to the case where $G = \R^n \times \Z^k$. Every compactly supported measure on $\R^n \times \Z^k$ can be regarded as a compactly supported measure on $\R^{n+k}$, and every exponential monomial on $\R^n \times \Z^k$ extends to an exponential monomial on $\R^{n+k}$. It follows that every compactly generated locally compact abelian group is a PIT-group. This completes the proof.
\end{proof}

\begin{Cor}
	Let $G$ be a locally compact abelian group. For each compactly supported complex measure $\mu$ on $G$ the exponential monomials span a dense subspace in the solution space of the functional equation $\mu*f=0$.
\end{Cor}
	
\section{Statements and Declarations}
Data sharing is not applicable to this article, as no datasets were generated or analyzed. The author declares that there are no financial or non-financial interests directly or indirectly related to the work submitted for publication.

\end{document}